\documentclass[11pt]{amsart}
\textwidth=5in
\textheight=7.5in
\usepackage{amscd}
\usepackage{amsmath, amssymb}
\usepackage{amsfonts}

\newcommand{\de}{\partial}

\newcommand{\Ric}{\mathrm{Ric}}

\newcommand{\db}{\overline{\partial}}
\newcommand{\ddbar}{\sqrt{-1}\de\db}

\newcommand{\R}{\mathbb{R}}

\newcommand{\tr}[2]{\textrm{tr}_{#1} #2}

\newcommand{\ti}[1]{\tilde{#1}}
\newcommand{\ve}{\varepsilon}
\newcommand{\vp}{\varphi}
\newcommand{\dist}{\mathrm{dist}}
\newcommand{\diam}{\mathrm{diam}}

\renewcommand{\leq}{\leqslant}
\renewcommand{\geq}{\geqslant}

\begin{document}
\newcounter{remark}
\newcounter{theor}
\setcounter{remark}{0}
\setcounter{theor}{1}
\newtheorem{claim}{Claim}
\newtheorem{thm}{Theorem}[section]
\newtheorem{prop}{Proposition}[section]
\newtheorem{lemma}{Lemma}[section]
\newtheorem{defn}{Definition}[theor]
\newtheorem{cor}{Corollary}[section]
\newenvironment{remark}[1][Remark]{\addtocounter{remark}{1} \begin{trivlist}
\item[\hskip
\labelsep {\bfseries #1  \thesection.\theremark}]}{\end{trivlist}}
\setlength{\arraycolsep}{2pt}

\title{Blowup behavior of the K\"ahler-Ricci flow on Fano manifolds}

\author{Valentino Tosatti}
\thanks{Supported in part by a Sloan Research Fellowship and NSF grant DMS-1236969.}
\address{Department of Mathematics, Northwestern University, 2033 Sheridan Road, Evanston, IL 60201}
\email{tosatti@math.northwestern.edu}
\begin{abstract}
	We study the blowup behavior at infinity of the normalized K\"ahler-Ricci flow on a Fano manifold which does not admit K\"ahler-Einstein metrics. We prove an estimate for the K\"ahler potential away from a multiplier ideal subscheme, which implies that the volume forms along the flow converge to zero locally uniformly away from the same set. Similar results are also proved for Aubin's continuity method.
\end{abstract}

\maketitle
\section{introduction}
Let $X$ be a Fano manifold of complex dimension $n$, which is a compact complex manifold with positive first Chern class $c_1(X)$, and let $\omega_0$ be a K\"ahler metric on $X$ with $[\omega_0]=c_1(X)>0$. Consider the normalized K\"ahler-Ricci flow, which is a flow of K\"ahler metrics $\omega_t$ in $c_1(X)$ which evolve by
\begin{equation}\label{flow1}
\frac{\de\omega_t}{\de t}=-\Ric(\omega_t)+\omega_t
\end{equation}
with initial condition $\omega_0$. Its fixed points are K\"ahler-Einstein (KE) metrics $\omega_{\mathrm{KE}}$
which satisfy $\Ric(\omega_{\mathrm{KE}})=\omega_{\mathrm{KE}}$, and it is known \cite{CS, PSS, ST, TZ} that
if $X$ admits a KE metric then the flow \eqref{flow1} converges smoothly to a (possibly different) KE metric.
On the other hand not every Fano manifold admits a KE metric, and a celebrated conjecture of Yau \cite{Y2}
predicts that this happens precisely when $(X,K_X^{-1})$ is stable in a suitable algebro-geometric sense.
The precise notion of stability is K-stability, introduced by Tian \cite{Ti2} and refined by Donaldson \cite{Do}.
Solutions of this conjecture by Chen-Donaldson-Sun \cite{CDS} and Tian \cite{Ti4} have appeared very recently.

We will also consider a different family of K\"ahler metrics $\ti{\omega}_t$ in $c_1(X)$
which solve Aubin's continuity method \cite{Au}
\begin{equation}\label{cont1}
\Ric(\ti{\omega}_t)=t\ti{\omega}_t+(1-t)\omega_0,
\end{equation}
with $t$ ranging in an interval inside $[0,1]$. We have that $\ti{\omega}_0=\omega_0$, and if \eqref{cont1} is
solvable up to $t=1$, then $\ti{\omega}_1$ is KE. On the other hand if no KE exists then \eqref{cont1} has a solution defined on a maximal interval $[0,R(X))$ where $R(X)\leq 1$ is an invariant of $X$ (independent of $\omega_0$)
characterized by Sz\'ekelyhidi \cite{Sz} as the greatest lower bound for the Ricci curvature of metrics in $c_1(X)$.

In this note we consider a Fano manifold $X$ which does not admit a KE metric, and investigate the question of the
behavior in this case of the K\"ahler-Ricci flow \eqref{flow1} as $t\to \infty$ or of the continuity method \eqref{cont1} as $t\to R(X)$.
In several recent works this question has been studied by reparametrizing the evolving metrics by diffeomorphisms and studying the geometric
limiting space \cite{Li, PSSW, SZ, TW, To}. The key point of this note is that we do not modify the evolving metrics by diffeomorphisms, but instead we want to understand the way in which they degenerate as tensors on the fixed complex manifold $X$.

To state our main result, let us introduce some notation.
The K\"ahler-Ricci flow \eqref{flow1} is equivalent to a flow of K\"ahler potentials in the following way. We have that $\omega_t=\omega_0+\sqrt{-1}\de\db\varphi_t$ where the functions $\vp_t$ evolve by
\begin{equation}\label{flow2}
\frac{\de\varphi_t}{\de t}=\log \frac{\omega_t^n}{\omega_0^n}+\varphi_t-h_0, \quad \vp_0=c_0,
\end{equation}
where $c_0$ is a suitable constant (defined in \cite[(2.10)]{PSS}) and where $h_0$ is the Ricci potential of $\omega_0$ (i.e. it satisfies $\Ric(\omega_0)-\omega_0=\sqrt{-1}\de\db h_0$ and $\int_X(e^{h_0}-1)\omega_0^n=0$).
Let us rewrite \eqref{flow2} as the following complex Monge-Amp\`ere equation
\begin{equation}\label{flow3}
(\omega_0+\sqrt{-1}\de\db\varphi_t)^n= e^{h_0-\varphi_t+\dot{\varphi}_t}\omega_0^n,
\end{equation}
where here and henceforth, we'll write $\dot{\varphi}_t=\frac{\de\varphi_t}{\de t}$.
The flow \eqref{flow2} has a global solution $\varphi_t$ for all $t\geq 0$ \cite{Ca}, and since
$X$ does not admit KE metrics, we must have
$\sup_{X\times[0,\infty)}\varphi_t=\infty$ (see e.g. \cite{PSS}). From now on
we fix a sequence of times $t_i\to\infty$ such that
\begin{equation}\label{supt}
\sup_{X\times[0,t_i]}\varphi_t=\sup_X\varphi_{t_i}\to\infty.
\end{equation}
For simplicity we'll write $\varphi_i=\varphi_{t_i}$ and $\omega_i=\omega_{t_i}$.

On the other hand if we write $\ti{\omega}_t=\omega_0+\ddbar\ti{\vp}_t$, then the continuity method \eqref{cont1} is equivalent to the complex Monge-Amp\`ere equation
\begin{equation}\label{cont2}
(\omega_0+\ddbar\ti{\vp}_t)^n =  e^{h_0 -t\ti{\vp}_t}\omega_0^n.
\end{equation}
If $X$ does not admit KE metrics then a solution $\ti{\vp}_t$ exists for $t\in [0,R(X))$ with $0<R(X)\leq 1$, and
$\sup_X\ti{\vp}_t\to \infty$ as $t$ approaches $R(X)$. We then fix a sequence $t_i\in [0,R(X))$ with $t_i\to R(X)$
and write $\ti{\vp}_i=\ti{\vp}_{t_i}$ and $\ti{\omega}_i = \ti{\omega}_{t_i}$.

In \cite{Na2}, Nadel proved that there is a proper analytic subvariety $S\subset X$ (a suitable multiplier ideal subscheme \cite{Na1}) such that
the measures $\ti{\omega}^n_i$ converge (as measures) to zero on compact subsets of $X\backslash S$.
More recently, the same statement was proved for the measures $\omega^n_i$ along the K\"ahler-Ricci flow by Clarke-Rubinstein \cite[Lemma 6.5]{CR} (see also \cite{Pa} for
a weaker statement). It is natural
to ask whether this convergence can be improved. In this note, we show that away from a possibly larger proper analytic subvariety
the measures $\omega^n_i$ and $\ti{\omega}^n_i$ converge to zero uniformly on compact sets.
More precisely, we have:

\begin{thm}\label{main}
Assume that $X$ is a Fano manifold that does not admit a K\"ahler-Einstein metric, and let $\varphi_i, \omega_i$ be defined as above. Then for any $\ve>0$ there is a proper nonempty analytic subvariety $S_\ve\subset X$ and a subsequence of $\varphi_i$ (still denoted by $\varphi_i$) such that given any compact set $K\subset X\backslash S_\ve$ there is a constant $C$ that depends only on $K,\ve,\omega_0$ such that
for all $x\in K$ and for all $i$ we have
\begin{equation}\label{stima}
-\varphi_i(x)+(1-\ve)\sup_X \varphi_i\leq C.
\end{equation}
In particular, $\varphi_i$ goes to plus infinity locally uniformly outside $S_\ve$,
and the volume forms $\omega_i^n$ converge to zero in the same sense.
Finally, the same properties hold for $\ti{\vp}_i$ and $\ti{\omega}_i$ which solve Aubin's continuity method.
\end{thm}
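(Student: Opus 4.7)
The plan is to combine $L^1$-compactness of $\omega_0$-plurisubharmonic potentials with Nadel-type multiplier-ideal arguments and a PDE comparison to convert integral bounds into the pointwise estimate \eqref{stima}. First I would set $M_i = \sup_X\vp_i \to \infty$ and $\psi_i = \vp_i - M_i$, reducing \eqref{stima} to the equivalent lower bound $\psi_i \geq -\ve M_i - C$ on $K$. The functions $\psi_i$ are $\omega_0$-psh with $\sup\psi_i = 0$, hence uniformly $L^1(X)$-bounded, so by the standard compactness of $\omega_0$-psh functions I pass to a subsequence with $\psi_i \to \psi_\infty$ in $L^1$ and almost everywhere, where $\psi_\infty$ is $\omega_0$-psh with $\sup\psi_\infty = 0$. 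The non-existence of a KE metric guarantees that $\psi_\infty$ is genuinely unbounded below; otherwise a uniform $L^\infty$ bound on $\psi_i$ via Ko\l odziej's theorem would, together with \eqref{flow3}, force $\omega_i^n\to 0$ uniformly on $X$ and contradict the fixed total volume $\int_X\omega_i^n = \int_X\omega_0^n$.

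Next, for $\ve > 0$, I would form the multiplier ideal sheaf $\mathcal{I}((1-\ve)\psi_\infty)$; its cosupport $S_\ve$ is an analytic subvariety of $X$, proper by Skoda's integrability theorem (equivalently by a Tian $\alpha$-invariant argument) applied to the normalized $\psi_\infty$, and nonempty by the same Ko\l odziej-plus-volume contradiction sketched above. By Demailly--Koll\'ar semicontinuity of singularity exponents together with the Guan--Zhou strong openness theorem, for any compact $K \subset X\setminus S_\ve$ there is a neighborhood $U$ such that, for all $i$ sufficiently large,
\begin{equation*}
\int_U e^{-(1-\ve)\psi_i}\,\omega_0^n \leq C(U).
\end{equation*}

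To upgrade this integrability to the pointwise bound I would rewrite \eqref{flow3} as
\begin{equation*}
(\omega_0+\ddbar\psi_i)^n = e^{h_0+\dot\vp_i}\,e^{-M_i}\,e^{-\psi_i}\,\omega_0^n,
\end{equation*}
dominate the right-hand side by $C\,e^{-M_i}e^{-\psi_i}\omega_0^n$ using the Perelman-type upper bound $\dot\vp_i \leq C$ on the normalized KRF, and combine it with the uniform $L^{1+\delta}$-integrability on $U$ just established. A localized Ko\l odziej-type $L^\infty$ estimate on $U$ --- effected either via a smooth cutoff supported away from $S_\ve$, or by comparison against an auxiliary $\omega_0$-psh Demailly barrier $\rho$ with analytic singularities along $S_\ve$ and an application of the comparison principle for the complex Monge--Amp\`ere operator --- then yields $\psi_i \geq -\ve M_i - C$ on $K$. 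The Aubin continuity method \eqref{cont2} follows the same outline, with the exponent $-t_i\ti\vp_i$ in place of $-\vp_i+\dot\vp_i$ and the multiplier ideal built from $(R(X)-\ve)\ti\psi_\infty$ (where $\ti\psi_i = \ti\vp_i - \sup_X\ti\vp_i$) in place of $(1-\ve)\psi_\infty$; no $\dot\vp$ term needs to be handled.

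The hard part will be this last conversion from the integral bound to the pointwise estimate. Ko\l odziej's global $L^\infty$ theorem cannot be applied directly, because $e^{-\psi_i}$ is not uniformly in $L^{1+\delta}$ globally --- the failure is concentrated precisely along $S_\ve$. Localizing Ko\l odziej near $S_\ve$, and arranging the scaling so that the allowed loss of $\ve M_i$ in \eqref{stima} absorbs the blowup of the right-hand side of \eqref{flow3} near $S_\ve$, is the technical heart of the argument: the multiplier-ideal exponent $1-\ve$ has to be matched carefully against the exponent $\ve$ appearing in \eqref{stima}.
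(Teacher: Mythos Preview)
Your outline diverges from the paper's proof, and the step you flag as the ``hard part'' is a genuine gap rather than a routine technicality. The paper does not use Ko\l odziej-type pluripotential estimates at all; its mechanism for passing from an integral to a pointwise bound is Moser iteration with respect to the \emph{evolving} metric $\omega_i$, using the uniform Sobolev inequality along the normalized flow. Concretely: the parabolic Aubin--Yau $C^2$ estimate combined with the Harnack inequality $-\inf_X\vp_t\le C+n\sup_X\vp_t$ yields $\tr{\omega_i}{\omega_0}\le C_0e^{D\sup_X\vp_i}$; the elementary inequality $\Delta_{\omega_i}e^{-A\vp_i}\ge -nAe^{-A\vp_i}$ then feeds into Moser iteration on $\omega_i$-balls to give $\sup_{B_{\omega_i}(x,r/2)}e^{-A\vp_i}\le Cr^{-2n}\int_{B_{\omega_i}(x,r)}e^{-A\vp_i}\omega_i^n$. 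One takes $r_i\sim e^{-(D/2)\sup_X\vp_i}$ so that $B_{\omega_i}(x,r_i)\subset B_{\omega_0}(x,r_0)$, and the prefactor $r_i^{-2n}\sim e^{nD\sup_X\vp_i}$ is precisely the source of the $\ve$-loss once one sets $A=nD/\ve$. The integral on the right is controlled by Tian's $\alpha$-invariant estimate with exponent $A+1$, so the subvariety is $S_\ve=V(\mathcal{I}((nD/\ve+1)\psi))$, with exponent of order $1/\ve$ --- not your $1-\ve$.

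Your proposed route does not close. First, a bookkeeping point: $\int_U e^{-(1-\ve)\psi_i}\omega_0^n\le C$ says $e^{-\psi_i}\in L^{1-\ve}(U)$ uniformly, which is \emph{weaker} than the $L^{1+\delta}$ bound you need for Ko\l odziej; you would have to take the multiplier ideal of $(1+\delta)\psi_\infty$ instead. More seriously, there is no purely local Ko\l odziej estimate giving a lower bound on an $\omega_0$-psh function from a local $L^p$ bound on its Monge--Amp\`ere density --- think of $\log|z|$, whose Monge--Amp\`ere measure vanishes on the punctured ball yet which is unbounded below. Neither of your suggested fixes, a cutoff or a Demailly barrier $\rho$ with analytic poles along $S_\ve$, produces the comparison you need: the Monge--Amp\`ere mass of the barrier concentrates on $S_\ve$, away from $K$, so the comparison principle does not control $\psi_i$ on $K$. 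This is exactly why the paper works on $\omega_i$-balls and pays the price of the shrinking radius, rather than attempting a local $\omega_0$-estimate.
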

Moreover we can identify the subvariety $S_\ve$ as follows: from weak compactness of currents, there exists $\psi$ an $L^1$ function on $X$ which is $\omega_0$-plurisubharmonic, such that a subsequence of $\varphi_i-\sup_M\varphi_i$ converges to $\psi$ in $L^1$. Then we have that
$$S_\ve=V\left(\mathcal{I}\left(\frac{C}{\ve}\psi\right)\right),$$
where $C$ is a constant that depends only on $\omega_0$, and $\mathcal{I}$ denotes the multiplier ideal sheaf.
We note here that in the results of \cite{Na2} and \cite{CR} the multiplier ideal sheaf that enters is $\mathcal{I}(\gamma\psi)$, with $\frac{n}{n+1}<\gamma<1$, which gives a smaller
subvariety.

Finally let us remark that we expect Theorem \ref{main} to hold also when $\ve=0$, but our arguments below can only prove this when $n=1$ (in which case the theorem is empty
because there is just one
Fano manifold, $\mathbb{CP}^1$, which does admit a KE metric).
In fact more should be true: Tian's conjectural ``partial $C^0$ estimate'' for the continuity
method \cite{Ti4}
roughly says that $-\ti{\varphi}_i+\sup_X \ti{\varphi}_i$ should blow up at most logarithmically as we approach a subvariety. The
partial $C^0$ estimate was proved by Tian \cite{Ti3} for K\"ahler-Einstein Fano surfaces and more recently by Chen-Wang \cite{CW}
for the K\"ahler-Ricci flow on Fano surfaces. Very recently it was proved by Donaldson-Sun \cite{DS} for K\"ahler-Einstein metrics on Fano manifolds, by Chen-Donaldson-Sun \cite{CDS} and Tian \cite{Ti4}
for conic K\"ahler-Einstein metrics and by Phong-Song-Sturm \cite{PSS2} for shrinking K\"ahler-Ricci solitons.
\\

\noindent
{\bf Acknowledgements. }Most of this work was carried out while the
author was visiting the Morningside Center
of Mathematics in Beijing in 2007, which he would like to thank for the hospitality. He is
also grateful to S.-T. Yau for many discussions, to D.H. Phong for support and encouragement,
and to B. Weinkove for useful comments.

\section{Proof of the main theorem}

Before we start the proof of the main theorem, we need to recall a few estimates which
are known to hold along the K\"ahler-Ricci flow on Fano manifolds. The first one is the bound
\begin{equation}\label{pere1}
|\dot{\varphi}_t|\leq C,
\end{equation}
which holds for all $t\geq 0$, and was proved by Perelman (see \cite{ST}). It uses crucially the choice of $c_0$ in \eqref{flow2} given by \cite[(2.10)]{PSS}.
We will also need the following uniform Sobolev inequality \cite{Ye, Zh}
\begin{equation}\label{pere3}
\left(\int_X |f|^{\frac{2n}{n-1}}\omega_t^n\right)^{\frac{n-1}{n}}
\leq C_S \left(\int_X |\nabla f|^2_{\omega_t}\omega_t^n +\int_X |f|^2\omega_t^n
\right),
\end{equation}
which holds for all $t\geq 0$ and for all $f\in C^{\infty}(X)$, for a constant $C_S$ that depends only on $\omega_0$.
The following Harnack inequality \cite{Ru} will also be used
\begin{equation}\label{harnack}
-\inf_X \varphi_t\leq C+n\sup_X \varphi_t,
\end{equation}
which again holds for all $t\geq 0$.
Finally, we will use the following basic result:
\begin{prop}[Tian \cite{Ti}] \label{alpha}
Let $(X,\omega)$ be a compact K\"ahler manifold.
For any fixed $\lambda>0$ and for any sequence
$\varphi_i$ of K\"ahler potentials for $\omega$, there exists a subsequence, still denoted by $\varphi_i$, and a proper subvariety
$S\subset X$ such that for any $p\in X\backslash S$ there exists $r,C>0$ that depend
only on $\lambda,\omega$ and $p$, such that
\begin{equation}\label{integralozzo}
\int_{B_\omega (p,r)}e^{-\lambda(\varphi_i-\sup_X\varphi_i)}\omega^n\leq C.
\end{equation}
Also, the constants $r,C$ are uniform when $p$ ranges in a compact set of $X\backslash S$.
\end{prop}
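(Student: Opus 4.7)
My plan is to combine the weak compactness of $\omega$-plurisubharmonic functions with Nadel's coherence of the multiplier ideal sheaf and the Demailly--Koll\'ar semicontinuity of complex singularity exponents.

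First, I would normalize by setting $\psi_i := \varphi_i - \sup_X \varphi_i$, so that each $\psi_i$ is $\omega$-psh with $\sup_X \psi_i = 0$. The family of $\omega$-psh functions normalized by their supremum is relatively compact in $L^1(X,\omega^n)$ and cannot degenerate uniformly to $-\infty$ (the suprema being zero), so after passing to a subsequence we have $\psi_i \to \psi$ in $L^1$, where $\psi$ is $\omega$-psh with $\sup_X \psi = 0$; in particular $\psi \not\equiv -\infty$.

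Next, define $S := V(\mathcal{I}(\lambda\psi))$, with $\mathcal{I}(\lambda\psi)$ the Nadel multiplier ideal sheaf. By Nadel's coherence theorem, $S$ is an analytic subvariety of $X$, and it is proper because $\mathcal{I}(\lambda\psi)_p = \mathcal{O}_{X,p}$ at any point where $\psi$ is locally bounded, which is an open dense set. For $p \notin S$ the function $e^{-\lambda\psi}$ is integrable on some coordinate ball $B = B(p,2r)$; choosing a local potential $\rho$ for $\omega$ near $p$ and setting $u_i := \psi_i + \rho$, $u_\infty := \psi + \rho$ reduces the situation to a convergent sequence of ordinary plurisubharmonic functions on $B$, uniformly bounded above, with $e^{-\lambda u_\infty}\in L^1(B)$.

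The heart of the argument, and the main obstacle, is now to upgrade the integrability of $e^{-\lambda u_\infty}$ to a uniform bound $\int_{B(p,r)} e^{-\lambda u_i}\,\omega^n \leq C$ independent of $i$. I would invoke the semicontinuity of complex singularity exponents of Demailly--Koll\'ar: for a sequence of psh functions converging in $L^1_{\mathrm{loc}}$ with uniform upper bound, one has
$$\int_{B(p,r)} e^{-\lambda u_i}\,\omega^n \longrightarrow \int_{B(p,r)} e^{-\lambda u_\infty}\,\omega^n < \infty,$$
so in particular the left-hand side is uniformly bounded in $i$. Tian's original argument in \cite{Ti} derived an equivalent conclusion directly from H\"ormander's $L^2$-estimates, at the possible cost of replacing $\lambda$ by a slightly smaller constant, which is harmless since one can always start the argument with $\lambda' = (1+\delta)\lambda$. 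Finally, the uniformity of $r$ and $C$ as $p$ varies in a compact subset of $X\setminus S$ follows by a finite covering argument, since $S$ is closed.
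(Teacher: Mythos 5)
Your strategy is sound and it is genuinely different from what the paper does: the paper gives no proof at all, quoting the result from Tian's 1987 paper, where the subvariety is produced directly from the \emph{sequence} (this is essentially Nadel's multiplier ideal sheaf of a sequence of potentials, obtained via H\"ormander-type $L^2$/local integrability arguments and a diagonal extraction); the paper only brings in Demailly--Koll\'ar afterwards, to identify that sequence-defined subscheme with $V(\mathcal{I}(c\psi))$ for the weak limit $\psi$. You instead pass to the weak limit first and get the uniform integral bound from the Demailly--Koll\'ar semicontinuity theorem. That is a legitimate and in fact cleaner modern route, and it has the side benefit of building in the identification $S=V(\mathcal{I}(\lambda\psi))$ that the paper proves separately; since the statement of the Proposition does not prescribe $S$, this causes no loss.

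Two points need repair, though. First, your properness argument for $S$ is wrong as stated: the set where an $\omega$-psh function is locally bounded need not be dense (the polar set of a psh function can be dense, e.g.\ $\sum_k 2^{-k}\log|z-q_k|$ with $\{q_k\}$ dense). The correct justification is via Lelong numbers: by Skoda's lemma $\mathcal{I}(\lambda\psi)_p=\mathcal{O}_{X,p}$ whenever the Lelong number $\nu(\psi,p)<2/\lambda$, so $V(\mathcal{I}(\lambda\psi))$ is contained in the Lelong upper-level set $E_{2/\lambda}(\psi)$, which by Siu's theorem is a proper analytic subset since $\psi\in L^1$; alternatively, note that $V(\mathcal{I}(\lambda\psi))$ is analytic and has empty interior because it has measure zero. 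Second, the Demailly--Koll\'ar convergence statement requires the exponent to be \emph{strictly} below the complex singularity exponent, whereas $p\notin V(\mathcal{I}(\lambda\psi))$ only guarantees $c_p(\psi)\geq \lambda/2$, with possible equality; at the time in question one could not invoke the openness theorem to upgrade this. The fix is the one you mention in passing (but attach to Tian's argument rather than your own): define $S:=V(\mathcal{I}(\lambda'\psi))$ for some $\lambda'>\lambda$, which is still a proper analytic subvariety, and then for $p\notin S$ you have $\lambda/2<\lambda'/2\leq c_p(\psi)$ and Demailly--Koll\'ar applies with exponent $\lambda$, giving the uniform bound, with uniformity on compact subsets of $X\setminus S$ exactly as you say. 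With these two corrections the proof is complete.
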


\begin{proof}[Proof of Theorem \ref{main}]
The starting point is the parabolic analogue of the Aubin-Yau's $C^2$ estimate (see e.g. \cite{Ca, Ya}), which says that there exists a constant $C$ that depend only on $\omega_0$ such that for all $t\geq 0$ we have
\begin{equation}\label{c2}
\tr{\omega_0}{\omega_t}\leq C e^{C\varphi_t-(C+1)\inf_{X\times[0,t]}\varphi_s}.
\end{equation}
Here and in the following we will denote by $C$ a uniform positive constant which might change from line to line.
Combining \eqref{c2} with \eqref{harnack} and \eqref{supt} we get
$$\tr{\omega_0}{\omega_i}\leq C e^{C\sup_X\varphi_i+C\sup_{X\times[0,t_i]}\varphi_s}=C e^{C\sup_X\varphi_i}.$$
Notice that for any two K\"ahler metrics $\eta,\chi$ we always have that
$$\tr{\eta}{\chi}\leq \frac{1}{(n-1)!}(\tr{\chi}{\eta})^{n-1}\frac{\chi^n}{\eta^n},$$
and so in our case
$$\tr{\omega_i}{\omega_0}\leq Ce^{C\sup_X\varphi_i}\frac{\omega_0^n}{\omega_i^n}.$$
Using the Monge-Amp\`ere equation \eqref{flow3} and the estimate \eqref{pere1} we get
\begin{equation}\label{boundn}
\tr{\omega_i}{\omega_0}\leq Ce^{C\sup_X\varphi_i+\varphi_i}\leq C_0e^{D\sup_X\varphi_i},
\end{equation}
where $C_0$ and $D$ are uniform constants. An alternative derivation of \eqref{boundn} can be
obtained by evolving the quantity $\log \tr{\omega_t}{\omega_0}-A\vp_t$, with $A$ large, to obtain
$$\tr{\omega_t}{\omega_0}\leq C e^{A(\varphi_t-\inf_{X\times[0,t]}\varphi_s)},$$
and using
again \eqref{harnack}. As an aside, note that when $n=1$ we can actually choose $D=1$.

Let $A>0$ be a constant, to be determined later, and compute
\begin{equation}\label{ellineq}
\Delta_{\omega_i}e^{-A\vp_i}\geq-Ae^{-A\varphi_i}\Delta_{\omega_i}\varphi_i\geq -nAe^{-A\varphi_i}.
\end{equation}

\begin{prop}\label{moser}
For any $x\in X$ and $0<r<\diam(X,\omega_i)/2$ there is a constant $C$ that
depends only on $A,\omega_0$ such that
\begin{equation}\label{stima6}
\sup_{B_{\omega_i}(x,r/2)}e^{-A\varphi_i}\leq \frac{C}{r^{2n}} \int_{B_{\omega_i}(x,r)}e^{-A\varphi_i}\omega_i^n,
\end{equation}
holds for all $i$.
\end{prop}
\begin{proof}
We apply the method of Moser iteration to the inequality \eqref{ellineq}. The method is standard, except for the fact that $r$ could be bigger than the injectivity radius of $\omega_i$  and so the balls
$B_{\omega_i}(x,r)$ need not be diffeomorphic to Euclidean balls, and in fact might not even be smooth domains. From now on let $i$ be fixed, and fix two positive numbers $\rho<R<\diam(X,\omega_i)/2$. Then let $\eta$ be a cutoff function of the form
$\eta(y)=\psi(\dist_{\omega_i}(y,x))$ where $\psi$ is a smooth nonincreasing function from $\R$ to $\R$ such that $\psi(y)=1$ for $y\leq \rho$, $\psi(y)=0$ for $y\geq (R+\rho)/2$ and
$$\sup_\R |\psi'|\leq \frac{4}{R-\rho}.$$
Then $\eta$ is Lipschitz, equal to $1$ on $B_{\omega_i}(x,\rho)$, supported
inside $B_{\omega_i}(x,R)$ and satisfies
$$|\nabla \eta|_{\omega_i}\leq \frac{4}{R-\rho}$$
almost everywhere. For simplicity of notation we will let $f=e^{-A\varphi_i}$
and suppress all references to the metric $\omega_i$. So we can write
\eqref{ellineq} as
\begin{equation}\label{ellineq2}
\Delta f\geq -nAf.
\end{equation}
Then for any $p\geq 2$ we compute
$$\int_{B(x,R)}\eta^2|\nabla (f^{p/2})|^2=\frac{p^2}{4(p-1)}\int_{B(x,R)}
\eta^2\langle \nabla(f^{p-1}),\nabla f\rangle.$$
At this point we want to integrate by parts, and we can do this because of the following argument: we can exhaust $B(x,R)$ with an increasing sequence of subdomains $B_j$, $j=1,2,\dots$, that have smooth boundary.
Then we can apply Stokes' Theorem to each $B_j$, and when $j$ is sufficiently
large $\eta$ will vanish on $\de B_j$ so we get
$$\int_{B_j}
\eta^2\langle \nabla(f^{p-1}),\nabla f\rangle=-2\int_{B_j}\eta f^{p-1}
\langle \nabla\eta,\nabla f\rangle-\int_{B_j}\eta^2 f^{p-1}\Delta f.$$
Then we can let $j$ go to infinity and by dominated convergence the integrals
on $B_j$ converge to the same integrals on $B(x,R)$. Thus
$$\int_{B(x,R)}\eta^2|\nabla (f^{p/2})|^2=-\frac{p^2}{4(p-1)}\int_{B(x,R)}
\eta f^{p-1}(2\langle \nabla\eta,\nabla f\rangle+\eta\Delta f).$$
Using \eqref{ellineq2} and the Cauchy-Schwarz and Young inequalities we have that
\begin{equation}\label{stima3}
\begin{split}
\int_{B(x,R)}\eta^2|\nabla (f^{p/2})|^2&\leq C p \int_{B(x,R)}\eta^2 f^p
+p\int_{B(x,R)}f^p|\nabla\eta|^2\\
&+\frac{p}{4}\int_{B(x,R)}\eta^2 f^{p-2}|\nabla f|^2,
\end{split}
\end{equation}
where $C$ depends only on $A,n$.

The last term in \eqref{stima3} is equal to $\frac{1}{p}\int_{B(x,R)}\eta^2|\nabla (f^{p/2})|^2$
and so can be absorbed in the left hand side. We thus get
\begin{equation}\label{stima1}
\begin{split}
\int_{B(x,R)}\eta^2|\nabla (f^{p/2})|^2&\leq C p \int_{B(x,R)}\eta^2 f^p+\frac{Cp}{(R-\rho)^2}\int_{B(x,R)}f^p\\
&\leq \frac{Cp}{(R-\rho)^2}\int_{B(x,R)}f^p,
\end{split}
\end{equation}
as long as $R-\rho$ is small.
Now we use the Cauchy-Schwarz and Young inequalities again to bound
\begin{equation}\label{stima2}
\begin{split}
\int_{B(x,R)}|\nabla(\eta f^{p/2})|^2&\leq 2\int_{B(x,R)}\eta^2|\nabla (f^{p/2})|^2+ 2\int_{B(x,R)}f^p|\nabla\eta|^2\\
&\leq 2\int_{B(x,R)}\eta^2|\nabla (f^{p/2})|^2+\frac{C}{(R-\rho)^2}\int_{B(x,R)}f^p.
\end{split}
\end{equation}
Combining \eqref{stima1} and \eqref{stima2} we have
$$\int_{B(x,R)}|\nabla(\eta f^{p/2})|^2\leq \frac{Cp}{(R-\rho)^2}\int_{B(x,R)}f^p.$$
This together with the Sobolev inequality \eqref{pere3} gives
\begin{equation}\label{iteration2}
\begin{split}
\left(\int_{B(x,R)} \eta^{2\beta}f^{p\beta}\right)^{1/\beta}&\leq C
\int_{B(x,R)}|\nabla(\eta f^{p/2})|^2+C\int_{B(x,R)}\eta^2 f^p\\
&\leq \frac{Cp}{(R-\rho)^2}\int_{B(x,R)}f^p,
\end{split}
\end{equation}
where we write $\beta=n/(n-1)>1$. Raising this to the $1/p$ gives
\begin{equation}\label{iteration}
\left(\int_{B(x,\rho)} f^{p\beta}\right)^{1/p\beta}\leq \frac{C^{1/p}p^{1/p}}{(R-\rho)^{2/p}}\left(\int_{B(x,R)} f^{p}\right)^{1/p}.
\end{equation}
For each $j\geq 0$ we now set $p_j=2\beta^j$ and $R_j=\rho+\frac{R-\rho}{2^j}$. Setting
$p=p_j$, $R=R_{j}$, $\rho=R_{j+1}$ in \eqref{iteration} and iterating
(notice that $(R_{j+1}-R_j)=(R-\rho)2^{-j-1}$ is small) we easily get
\begin{equation*}
\begin{split}
\sup_{B(x,\rho)}f&\leq \frac{C}{(R-\rho)^{n}}
\left(\int_{B(x,R)} f^{2}\right)^{1/2}\\
&\leq
 \frac{C}{(R-\rho)^{n}}\left(\sup_{B(x,R)}f\right)^{1/2}
\left(\int_{B(x,R)} f\right)^{1/2}.
\end{split}
\end{equation*}
Using Young's inequality we see that
$$\sup_{B(x,\rho)}f\leq \frac{1}{2}\sup_{B(x,R)}f+\frac{C}{(R-\rho)^{2n}}
\int_{B(x,R)} f.$$
From here a standard iteration argument (see e.g. \cite[Lemma 3.4]{HL})
implies that
$$\sup_{B(x,\rho)}f\leq\frac{C}{(R-\rho)^{2n}}
\int_{B(x,R)} f,$$
and finally setting $\rho=r/2$, $R=r$ we get \eqref{stima6}.
\end{proof}

We now apply Proposition \ref{alpha} with $\lambda=A+1$ and get
an analytic subvariety $S$ with the property that given any compact set $K\subset X\backslash S$
there exists $r_0>0$ such that for any $x\in K$ we have that $B_{\omega_0}(x,r_0)\Subset X\backslash S$
and
\begin{equation}\label{stima7}
\int_{B_{\omega_0} (x,r_0)}e^{-(A+1)(\varphi_i-\sup_X\varphi_i)}\omega_0^n\leq C
\end{equation}
holds for all $i$.
We now let $r_i=r_0(C_0e^{D\sup_X\vp_i})^{-1/2},$ so that \eqref{boundn} implies that
\begin{equation}\label{stima8}
B_{\omega_i}(x,r_i)\subset B_{\omega_0}(x,r_0),
\end{equation}
so in particular $r_i<\diam(X,\omega_i)/2$.
Therefore we can apply Proposition \ref{moser}, and combining \eqref{stima6}, \eqref{stima7} and \eqref{stima8} we obtain
\begin{equation*}
\begin{split}
e^{-A\varphi_i(x)}&\leq \sup_{B_{\omega_i}(x,r_i/2)}e^{-A\varphi_i}\\
&\leq \frac{C}{r_i^{2n}}\int_{B_{\omega_i}(x,r_i)}
e^{-A\vp_i}\omega_i^n\\
&\leq C e^{nD\sup_X\vp_i}\int_{B_{\omega_0}(x,r_0)} e^{-(A+1)\vp_i}\omega_0^n\\
&= C e^{(nD-A-1)\sup_X\varphi_i}
\int_{B_{\omega_0}(x,r_0)}
e^{-(A+1)(\varphi_i-\sup_X\varphi_i)}\omega_0^n\\
&\leq C e^{(nD-A-1)\sup_X\varphi_i},
\end{split}
\end{equation*}
where $C$ depends only on given data and $A$. Taking log gives
$$-A\varphi_i(x)+(A-nD+1)\sup_X\varphi_i\leq C.$$
We now let $A=\frac{nD}{\ve}$ and divide by $A$ (keeping in mind that $\sup_X\vp_i>0$), and obtain the desired bound
$$-\varphi_i(x)+(1-\ve)\sup_X\varphi_i\leq C,$$
where $C$ does not depend on $i$ or on $x\in K\subset X\backslash S$, and
where the subvariety $S=S_\ve$ now depends on $\ve$.
This, together with \eqref{pere1}, immediately implies that the volume form $\omega_i^n=\omega_0^ne^{h_0-\vp_i+\dot{\vp}_{t_i}}$ goes to zero locally uniformly on $X\backslash S$.
\end{proof}
We now identify the subvariety $S_\ve$: from its definition,
that is from Proposition \ref{alpha}, we see that $S_\ve$ is equal to the multiplier ideal subscheme of the sequence $\varphi_i$ with exponent $(nD/\ve +1)$ as defined by Nadel in \cite{Na1}.
But the main Theorem in \cite{DK} then shows that this is the same as the multiplier ideal subscheme defined by $(nD/\ve +1)\psi$ where $\psi$ is a weak limit of $\varphi_i-\sup_M\varphi_i$.

The same proof as above goes through with minimal changes in the case of Aubin's continuity method, that is for the functions $\ti{\vp}_i$ and the metrics
$\ti{\omega}_i$.
We just need to justify why estimates analogous to \eqref{pere3} and \eqref{harnack} hold.
To see these, note that the metrics $\ti{\omega}_i$ satisfy $\Ric(\ti{\omega}_i)\geq C^{-1}\ti{\omega}_i>0$, so the
Bonnet-Myers theorem gives us the estimate $\diam(X,\ti{\omega}_i)\leq C$, which together with the fact that
their volume is fixed allows us to apply a result of Croke \cite{Cr} which gives a uniform Sobolev inequality of
the form \eqref{pere3} for the metrics $\ti{\omega}_i$. The Harnack inequality \eqref{harnack} in this case is proved in \cite{Ti}.


\begin{thebibliography}{99}
\bibitem{Au} Aubin, T. {\em R\'eduction du cas positif de l'\'equation de Monge-Amp\`ere sur les vari\'et\'es k\"ahl\'eriennes compactes \`a la d\'emonstration d'une in\'egalit\'e}, J. Funct. Anal. {\bf 57} (1984), no. 2, 143--153.
\bibitem{Ca} Cao, H.-D. {\em Deformation of K\"ahler metrics to K\"ahler-Einstein metrics on compact K\"ahler manifolds}, Invent. Math. {\bf 81}  (1985), no. 2, 359--372.
\bibitem{CDS} Chen, X., Donaldson, S.K., Sun, S. {\em K\"ahler-Einstein metrics and stability}, Int. Math. Res. Not. IMRN {\bf 2013}, Art. ID rns279, 7 pp.
\bibitem{CW}  Chen, X., Wang, B. {\em The K\"ahler Ricci flow on Fano manifolds (I)}, J. Eur. Math. Soc. (JEMS) {\bf 14} (2012), no. 6, 2001--2038.
\bibitem{CR} Clarke, B., Rubinstein, Y. {\em  Ricci flow and the metric completion of the space of K\"ahler metrics}, to appear in Amer. J. Math., arXiv:1102.3787.
\bibitem{Cr} Croke, C.B. {\em Some isoperimetric inequalities and eigenvalue estimates}, Ann. Sci. \`Ecole Norm. Sup. (4) {\bf 13} (1980), no. 4, 419--435.
\bibitem{CS} Collins, T.C., Sz\'ekelyhidi, G. {\em The twisted K\"ahler-Ricci flow}, arXiv:1207.5441.
\bibitem{DK} Demailly, J.-P., Koll\'ar, J. {\em Semi-continuity of complex singularity exponents and K\"ahler-Einstein metrics on Fano orbifolds}, Ann. Sci. \'Ecole Norm. Sup. (4) {\bf 34} (2001), no. 4, 525--556.
\bibitem{Do} Donaldson, S.K. {\em Scalar curvature and stability of toric varieties}, J. Differential Geom. {\em 62} (2002), no. 2, 289--349.
\bibitem{DS} Donaldson, S.K., Sun, S. {\em Gromov-Hausdorff limits of K\"ahler manifolds and algebraic geometry}, arXiv:1206.2609.
\bibitem{HL} Han, Q., Lin, F. {\em Elliptic partial differential equations}, AMS 1997.
\bibitem{Li} Li, C. {\em On the limit behavior of metrics in continuity method to K\"ahler-Einstein problem in toric Fano case}, to appear in Compos. Math., arXiv:1012.5229.
\bibitem{Na1} Nadel, A.M. {\em Multiplier ideal sheaves and K\"ahler-Einstein metrics of positive scalar curvature}, Ann. of Math. (2) {\bf 132} (1990), no. 3, 549--596.
\bibitem{Na2} Nadel, A.M. {\em Multiplier ideal sheaves and Futaki's invariant}, in \emph{Geometric theory of singular phenomena in partial differential equations (Cortona, 1995)},  7--16, Cambridge Univ. Press, 1998.
\bibitem{Pa} Pali, N. {\em Characterization of Einstein-Fano manifolds via the K\"ahler-Ricci flow}, Indiana Univ. Math. J. {\bf 57} (2008), no. 7, 3241--3274.
\bibitem{PSS} Phong, D.H., \v Se\v sum, N., Sturm, J. {\em Multiplier ideal sheaves and the K\"ahler-Ricci flow}, Comm. Anal. Geom. {\bf 15} (2007), no. 3, 613--632.
\bibitem{PSS2} Phong, D.H., Song, J., Sturm, J. {\em Degeneration of K\"ahler-Ricci solitons on Fano manifolds}, arXiv:1211.5849.
\bibitem{PSSW} Phong, D.H., Song, J., Sturm, J., Weinkove, B. {\em  The K\"ahler-Ricci flow and the $\db$ operator on vector fields}, J. Differential Geom. {\bf 81} (2009), no. 3, 631--647.
\bibitem{Ru} Rubinstein, Y.A. {\em On the construction of Nadel multiplier ideal sheaves and the limiting behavior of the Ricci flow},  Trans. Amer. Math. Soc. {\bf 361} (2009), no. 11, 5839--5850.
\bibitem{ST} \v Se\v sum, N., Tian, G. {\em Bounding scalar curvature and diameter along the K\"ahler Ricci flow (after Perelman)}, J. Inst. Math. Jussieu {\bf 7} (2008), no. 3, 575--587.
\bibitem{SZ} Shi, Y., Zhu, X. {\em  An example of a singular metric arising from the blow-up limit in the continuity approach to K\"ahler-Einstein metrics}, Pacific J. Math. {\bf 250} (2011), no. 1, 191--203.
\bibitem{Sz} Sz\'ekelyhidi, G. {\em Greatest lower bounds on the Ricci curvature of Fano manifolds}, Compos. Math. {\bf 147} (2011), no. 1, 319--331.
\bibitem{Ti} Tian, G. {\em On K\"ahler-Einstein metrics on certain K\"ahler manifolds with $C\sb 1(M)>0$},
Invent. Math. {\bf 89} (1987), no. 2, 225--246.
\bibitem{Ti3} Tian, G. {\em On Calabi's conjecture for complex surfaces with positive first Chern class}, Invent. Math. {\bf 101} (1990), no. 1, 101--172.
\bibitem{Ti2} Tian, G. {\em K\"ahler-Einstein metrics with positive scalar curvature}, Invent. Math. {\bf 130} (1997), no. 1, 1--37.
\bibitem{Ti4} Tian, G. {\em K-stability and K\"ahler-Einstein metrics}, arXiv:1211.4669.
\bibitem{TW} Tian, G., Wang, B. {\em On the structure of almost Einstein manifolds}, arXiv:1202.2912.
\bibitem{TZ} Tian, G., Zhu, X. {\em Convergence of K\"ahler-Ricci flow}, J. Amer. Math. Soc. {\bf 20} (2007), no. 3, 675--699.
\bibitem{To} Tosatti, V. {\em K\"ahler-Ricci flow on stable Fano manifolds}, J. Reine Angew. Math. {\bf 640} (2010), 67--84.
\bibitem{Ya} Yau, S.-T. \emph{On the Ricci curvature of a compact K\"ahler manifold and the complex Monge-Amp\`ere equation, I}, Comm. Pure Appl. Math. {\bf 31} (1978), 339--411.
\bibitem{Y2} Yau, S.-T. {\em Open problems in geometry}, Proc. Sympos. Pure Math. {\bf 54} (1993), 1-28 (problem 65).
\bibitem{Ye} Ye, R. {\em The logarithmic Sobolev inequality along the Ricci flow}, preprint, arXiv:math/0707.2424.
\bibitem{Zh} Zhang, Q.S. \emph{A uniform Sobolev inequality under Ricci flow}, Int. Math. Res. Not. IMRN {\bf 2007},  no. 17, Art. ID rnm056, 17 pp.
 and erratum in Int. Math. Res. Not. IMRN {\bf 2007},  no. 19, Art. ID rnm096, 4 pp.
\end{thebibliography}
\end{document}